\documentclass{article}[12pt] 
\usepackage{amssymb}
\usepackage{graphicx}
\usepackage{natbib}
\newtheorem{thm}{Theorem}
\newtheorem{lem}{Lemma}
\newtheorem{defn}{Definition}

\begin{document}
\noindent
UDK 368:519.21

\noindent
B.V. NORKIN 

\bigskip
\noindent
SAMPLE AVERAGE APPRIXIMATION OF MULTIOBJECTIVE 
\\STOCHASTIC OPTIMIZATION PROBLEMS 
\\WITH APPLICATION IN INSURANCE
%

\begin{abstract}
The article describes a technique for solving multiobjective stochastic optimization problems. As a generalized model of a stochastic system to be optimized a vector   "input-random output" system is used. Random outputs are converted into a vector of deterministic performance and risk indicators. The problem is to find those inputs that correspond to a Pareto-optimal values of output indicators. The problem is approximated by a sequence of deterministic multi-criteria optimization problems, where, for example, the objective vector
function is a sample average approximation of the original one and the feasible set is a discrete sample approximation of the feasible inputs. Approximate
optimal solutions are defined as weakly Pareto-efficient ones within some vector
tolerance. Convergence analysis includes establishing convergence of the general
approximation scheme and establishing conditions of convergence with probability one under proper regulation of sampling parameters. The proposed technique is illustrated on a computer system for supporting multi-criteria optimization of insurance business.
\end{abstract}

{\bf Keywords:}
Multiobjective stochastic optimization, Pareto optimality, random search,  parallel Monte Carlo method, optimization of insurance business.


\section{Introduction}
Contemporary approach to optimal decision making is based on system modeling and system optimization. Any complex system can be considered as an "Input-Output" system $y=g(x)$, where $x$ denotes the input parameter vector from some feasible set ${\rm X}$, and $y$ is an output parameter vector from a set ${\rm Y}$, $g$ is some mapping of ${\rm X}$ into ${\rm Y}$. The optimization (e.g., maximization) is applied to some utility functional $ f (x, y) $ under constraints $ y = g (x) $, $ x \in {\rm X}$. The vector of input parameters can often be divided into controlled part, which still is denoted by $ x$, and uncontrolled one $ \omega$ that takes values from some set $ \Omega$. Thus, the model becomes $ y = g(x,\omega)$. The model is usually given by a simulation computer program or as an output result of an optimization solver.

  The vector $ \omega $ of uncertain parameters can be either deterministic or random with distribution $ P$. In the first case, the optimization problem reads as: $ \min _ {\omega \in \Omega} f (x, y = g (x,\omega)) \to \max _ {x \in X}$ that corresponds to the so-called minimax decision-making approach. In the second case, the problem relates to the stochastic programming and, in particular, can be formulated as: $ F (x) = {\rm E} f (x, y = g (x, \omega)) \to \max _ {x \in X}$, where $ {\rm E}$ denotes the mathematical expectation operation \cite{Ermoliev_1976rus}, \cite{Shapiro_Dentcheva_Ruszczynski_2009}. 
	
	However, efficient and unambiguous choice of a utility function $ f $ is not always possible. Although a preference relation can exist on the set of "input-output" pairs $ {\rm X \times Y}$ or on ${\rm Y}$  that allows to consider only non-dominated "input-output" pairs. This situation relates to the deterministic multi-criteria optimization. If the model $ y = g (x, \omega) $ contains uncertain parameters $ \omega $, then this situation is qualified as an uncertain programming and can be formalized in different ways \cite{Liu_2005}. Note that the stochastic programming problem already contains a vector criterion $ f (x, y = g (x, \omega)) $, $ \omega \in \Omega$, with a large number of components that, in any case, are reduced to one or more indicators. The most commonly used indicator is the average value of $ F (x) = {\rm E} f (x, y = g (x, \omega))$, along with the variance functions, probability, quantile (VaR) and other risk indicators \cite{Shapiro_Dentcheva_Ruszczynski_2009}, \cite{Gutjahr_Pichler_2013},\cite{Stancu-Minasian_1984}.  Optimization of such indicators requires substantial computational resources, and in particular, use of parallel computing. In this paper we consider the usage of parallel calculations for solving multiobjective stochastic optimization problems of actuarial mathematics.

\section {Literature review.} 

Unlike standard one-criterion stochastic programming problems \cite{Ermoliev_1976rus},
\cite{Shapiro_Dentcheva_Ruszczynski_2009} the problem of output vector optimization can contain non-convex,
non-smooth or even discontinuous functions, so traditional stochastic programming methods such as gradient type procedures \cite{Ermoliev_1976rus} with a random starting point might not be applicable. In this case random search methods, for example, evolutionary or hybrid algorithms should be applied  \cite{Branke_et_al_2008}, \cite{Coello_2006}, \cite{Gutjahr_2012}, 
\cite{Konak_Coit_Smith_2006}, \cite{Marler_Arora_2004}. In case of small dimension $ n $ of the set $ X \subset {\rm R}^{n}$ a simple method of uniform random search can appear to be competitive, moreover it allows natural parallelization. This method randomly generates a cloud of points in the feasible region, and then non-dominated points are selected. In the vicinity of the non-dominated points a new random points are generated, again non-dominated points are chosen and so on. Efficiency is boosted due to the fact that the new points are generated in the perspective areas. However, in solving multidimensional combinatorial problems the simple random search method gives the way for more sophisticated evolutionary algorithms, see e.g. \cite{Kleeman_Lamont_2008}, \cite{Zilinskas_2014}. We propose an interactive multi-criteria method of controlled random search, where the region for generation of new random points is defined by the decision making person on the basis of visual analysis of the set of non-dominated points from the previous generation. In this paper the results of previous author's works \cite{Norkin_2011_HPC-UA}, \cite {Norkin_2012_KPIeng}, \cite{Norkin_2014_CSA2} on simulation of insurance business are extended towards the dynamic stochastic multi-criteria optimization with parallel computations. Performance indicators of an insurance company are estimated by means of parallel Monte Carlo method, and Pareto-optimal (or non-$\epsilon $-dominated) set is constructed by means of interactive random search method.  In the present paper we focus on implementation and convergence analysis of the method. Other non-scalarization approaches to stochastic multi-criterion optimization are reviewed in \cite{Gutjahr_Pichler_2013}. Overview of computer systems that implement interactive deterministic multi-criteria optimization is available in \cite{Poles_Vassileva_Sasaki_2008}. Usage of parallel calculations in multi-criteria optimization is discussed in \cite{Tabli_et_al_2008}.  In \cite{Fliege_Xu_2011} another (scalarization) approach to vector stochastic optimization was studied: the vector performance indicators such as mathematical expectations are evaluated by means of Monte Carlo method and then are aggregated into a scalar performance indicators by means of non-linear utility functions. In \cite{Jevne_Haddow_Gaivornski_2012} evolutionary multiobjective optimization algorithms are applied to search for Pareto-optimal portfolios according to the criteria of the average return and high-level quintile.

\section {Stochastic multiobjective optimization} 

Paper \cite{Gutjahr_Pichler_2013} provides a contemporary review of stochastic multiobjective optimization problem settings. In context of "input-output" models
when the utility function is unknown, we have to deal directly with the vector model $ y = g (x) $, which maps the set of inputs $ X $ into a set of outputs $ Y = f (X) = \left \{y = f (x), \, x \in X \right \} \subseteq {\rm Y}$. At the same time some kind of preference relation $ \prec$ is available on the set of outputs. In case of $ Y \subseteq {\rm R} ^ {m} $ this relation $ \prec $ is usually defined by a cone of nonnegative vectors $ {\rm R}_{+}^{m} = \left \{y = (y_ {1}, ..., y_ {m}) \in {\rm R}^{m}: \, \, y_{i} \ge 0, \, 1 \le i \le m \right \} $. In this case, $ y_{1} \prec y_ {2}$ if and only if $ \left (y_{2} -y_{1} \right) \in {\rm R}_{+}^{m}$. The set of $\prec$-optimal vectors $Y^{*}$ is a subset of $ Y = f (X) $ such that there is no $y\in Y$, $y^*\prec y$ and $ y \ne y^{*}$. Corresponding set of inverse images of $ X^{*} = \left \{x \in X: \, \, f(x) \in Y^{*} \right \}$ is called the set $ \prec$-optimal solutions. Vector optimization problem consists in  finding or approximating the sets $ Y ^ {*} $ and $ X ^ {*} $. 
If the model contains uncertain or stochastic parameters $ \omega $ and instead of a utility function only a preference relation $ \prec$ is defined on the set of outputs, then we are dealing with the problem of a vector (or multi-criteria) stochastic programming \cite{Gutjahr_Pichler_2013},\cite{Stancu-Minasian_1984}. In this case for each $ \omega$, there is a multiple-output $ Y _ {\omega} = f _ {\omega} (X) = \left \{y = f (x, \omega), \, x \in X \right \}$ and its $ \prec$-optimal set $ Y _ {\omega} ^ {*} $ and we have to determine what set can be considered as a common optimal set for all $ \omega$. To do this, we have to define additional preference relation on the set of random vectors $f(x,\omega)$.

In case of one-dimensional stochastic model, when $ y = f (x, \omega) \in {\rm R}$, the required preference relation can be a stochastic dominance of the first $ \prec_{(1)} $ or higher orders $ \prec_{(i)}$, $i\ge 2$, \cite{Muller_Stoyan_2002}. 
Let us remind that $ f (x_ {1}, \omega) \prec _ {(1)} f (x_ {2}, \omega) $, if for distribution functions $ F_ {1} (t) $ and $ F_ {2} (t) $ of random variables $ f (x_ {1}, \omega) $ and $ f (x_ {2}, \omega) $, it fulfills $ F_ {1} (t) \ge F_ {2} (t) $ for all $ t \in {\rm R}$. 
First order relation $\prec_{(1)}$ is verifiable in case of a finite discrete distribution of $\omega$.
The second order relation $ f (x_ {1}, \omega)\prec_{(2)}f(x_{2},\omega)$ holds if ${\rm E} u\left(f(x_{1},\omega)\right)\ge{\rm E} u\left (f(x_{2},\omega)\right)$ for all concave non-decreasing functions $u(\cdot)$ such that mathematical expectations exist. This definition can be extended to vector random variables, but verification of the stochastic dominance relation appears to be rather difficult as it involves checking an infinite number of inequalities.

We can consider the preference relation $ \prec _ {{\rm E}} $ on the basis of expectations, $ f (x_ {1}, \omega) \prec _ {{\rm E}} f (x_ {2}, \omega) $ if and only if $ {\rm E} f (x_ {1}, \omega) \le {\rm E} f (x_ {2}, \omega) $ (component-wise) and $ {\rm E} f (x_ {1}, \omega) \ne {\rm E} f (x_ {2}, \omega) $. As the optimal set one can take $ \prec_{{\rm E}}$-optimal subset of $ \left \{{\rm E} f (x, \omega), \, \, x \in X \right \} $. It is known that the set of $ \prec _ {{\rm E}} $-optimal points in the convex case can be obtained by optimizing the set of linear $ \left \langle w, {\rm E} f (x, \omega) \right \rangle $, $ w \in W $, or nonlinear $ U ({\rm E}f (x, \omega)) $, $ U \in {\rm U} $ scalar convolutions of vector criterion $ {\rm E} f (x, \omega) $ \cite {Jahn_2011}. In \cite {Fliege_Xu_2011} expectations $ {\rm E} f (x, \omega) $  were replaced by their empirical estimates $ {\rm E} _ {N} f (x, \omega) $ and the convergence of the set of optimal points $ U \left ({\rm E} _ {N} f (x, \omega) \right) \to extr_ {x \in X} $, $ U \in {\rm U} $, to $ \prec _ {{\rm E}} $-optimal set in case of $ N \rightarrow \infty $ was studied. 

In a similar way preference relations $ \prec_{P} $ and $\prec_{Q} $ are introduced, based on the comparison of sets of probabilities $ \Pr \left \{f_ {i} (x, \omega) \ge t_ {i} \right \} $, $ i = 1, ..., m, $ and quantiles $ Q_ {i} (x, q_ {i }) = \inf \left \{t: \, \, \Pr \left \{f_ {i} (x, \omega) \ge t \right \} \le q \right \} $, $ i = 1, ..., m $, respectively. 

Along with the average values ${\rm E} f (x, \omega) = \left ({\rm E} f_ {1} (x, \omega), ..., {\rm E} f_ {m} (x, \omega) \right) $ it is worthwhile to consider the standard deviations $ \sigma _ {i} (x) = \left ({\rm E} (f_ {i} (x, \omega) - {\rm E} f_ {i} (x, \omega))^{2} \right)^{1/2} $ or standard semi-deviations $ \sigma _ {i}^{+} (x) = \left ({\rm E } \, {\rm max}^{2} \left \{0, f_ {i} (x, \omega) - {\rm E} f_ {i} (x, \omega) \right \} \right )^{1/2} $ of indicators $ f_ {i} (x, \omega) $ from their average values $ {\rm E} f_ {i} (x, \omega) $, $ i = 1, .. ., m $, as well as vector indicators $ \left ({\rm E} f_ {1} (x, \omega) + \alpha _ {1} \sigma _ {1} (x), ..., {\rm E} f_ {m} (x, \omega) + \alpha _ {m} \sigma _ {m} (x) \right) $. In \cite {Caballero_et_al_2001} the relationships between Pareto optimal sets of various deterministic formalizations of the multiobjective stochastic  optimization problem were studied, including the problems: 

$ {\rm E} f (x, \omega) \to \min _ {x \in X} $, 

$ \sigma (x) = \left (\sigma _ {1} (x), ..., \sigma _ {m} (x) \right) \to \min _ {x \in X } $, 

$ \left ({\rm E} f_ {1} (x, \omega) + \alpha _ {1} \sigma _ {1} (x), ..., {\rm E} f_ {m} (x, \omega) + \alpha _ {m} \sigma _ {m} (x) \right) \to \min _ {x \in X} \\ 
\left(\alpha = (\alpha _ {1}, ..., \alpha _ {m}) \in {\rm R} _ {+}^{m} \right) $, 

$ \left \{\Pr \left \{f_ {i} (x, \omega) \ge t_ {i} \right \}, \, \, i = 1, ..., m \right \} \to \max _ {x \in X} $, 

$ \left \{Q_ {i} (x, q_ {i}), \, \, i = 1, ..., m \right \} \to \min _ {x \in X} $. 

The following concept appears to be useful for control of accuracy, strength and direction of dominance.

\begin{defn}
($ \vec {\epsilon} $-dominance and $ \vec {\epsilon} $-efficiency/optimality). Vector $ \vec {g} _1 \in {\rm R}^m $ $\; \vec {\epsilon} $-dominates the vector $ \vec {g} _2 \in {\rm R}^m $, if $ \vec {g} _1> \vec {g} _2 + \vec {\epsilon} $ (component-wise), where $ \vec {\epsilon} \in {\rm R}^m $. Subset of $ G^* _ {\vec {\epsilon}} $ of the set $ G \subset {\rm R}^m $ is called $ \vec {\epsilon} $-efficient/optimal if for any $ \vec {g } \in G^* _ {\vec {\epsilon}} $ there is no $ \vec {g} \, '\in G $, $ \vec {g} \,' \neq \vec {g} $, such that $ \vec {g} \, '> \vec {g} + \vec {\epsilon} $. 
\end{defn}

The concept of $ \vec {\epsilon} $-efficiency was introduced in \cite {Kutateladze_1979}. In case of $ \vec {\epsilon}> 0 $, it generalizes the standard notion of $ {\epsilon} $-optimality of scalar optimization. In particular the concept of $ \vec {\epsilon} $-efficiency includes the notion of weak Pareto optimality \cite{Podinovski_Nogin_1982} 
that corresponds to $ \vec {\epsilon} = 0 $. Further various generalization of the  $ \vec {\epsilon} $-efficiency concept are discussed in \cite{Gutjahr_2012}, \cite{Gutierrez_Jimenez_Novo_2006}, \cite {Gutierrez_Jimenez_Novo_2012}. By adding $ \vec {\epsilon} $ to a vector $ \vec {g} $ the importance of components of $ \vec {g} $ can be controlled (the importance of the criteria in vector optimization), namely, increase of $ \epsilon_i $ component decreases the importance of $ g_i $ component. Moreover, in contrast to \cite {Kutateladze_1979}, \cite {Gutierrez_Jimenez_Novo_2006}  we allow $ \vec {\epsilon} \notin {\rm R}^m _ + $. If $ \vec {\epsilon} $ contains negative components, then $ \vec {\epsilon} $-dominance of $ \vec {g} _1 $ over $ \vec {g} _2 $ admits that some components of $ \vec {g} _1 $ can be somewhat smaller than the corresponding components of $ \vec {g} _2 $. Thus we note that if the point $ \vec {g}^* \in G $ is $ \vec {\epsilon} _k $-efficient for some sequence $ \{{\rm R}^m \ni \vec {\epsilon} _k \rightarrow 0, \, k = 1,2, ... \} $, then $ \vec {g}^* $ is called a generalized efficient point (see. \cite [Definition 5.53] {Mordukhovich_2006}).  

Let us remind some notation and definitions \cite[Sec. 4A]{Rockafellar_Wets_1998}, that concern the convergence of a sequence of sets $\{Z_i\subset {\rm R}^n,\,i=1,2,...\}$: $\limsup_i Z_i=\{z: \exists\; z_{i_k}\in Z_{i_k}, \; z=\lim_k z_{i_k}\}$, $\liminf_i Z_i=\{z: \exists\; z_{i}\in Z_{i}, \; z=\lim_i z_{i}\}$, $\lim_i Z_i=\liminf_i Z_i=\limsup_i Z_i$. 

\begin{lem}\label{Lemma_1}
(Properties of the $\vec{\epsilon}$-optimal mappings). Let sequence of sets $\{Z_i\in{\rm R}^m\}$ converges to a compact set $\{Z\subset{\rm R}^m\}$, $\lim_i Z_i=Z$. Denote $Z^*_i(\vec{\epsilon})$, $Z^*(\vec{\epsilon})$ subsets of $\vec{\epsilon}$-nondominated points in $Z_i$ and $Z$, respectively. Let $\lim_i\vec{\epsilon}_i=\vec{\epsilon}$. Then for any $\vec{\epsilon}\,'\le\vec{\epsilon}$, $\vec{\epsilon}\,'\ne\vec{\epsilon}$, it holds true 
\[ Z^*(\vec{\epsilon}\,')\subseteq\liminf_i Z^*_i(\vec{\epsilon}_i)\subseteq\limsup_i Z^*_i(\vec{\epsilon}_i)\subseteq Z^*(\vec{\epsilon}), \] 
where the last inclusion, in particular, indicates that the mapping $\vec{\epsilon}\rightarrow Z^*(\vec{\epsilon}) $ is upper semicontinuous. More over, in case of convex set $Z$ we have
$\liminf_{i}Z^{*}_i(\vec{\epsilon}_i)=
\limsup_{i}Z^{*}_i(\vec{\epsilon})=
Z^{*}(\vec{\epsilon}).$
\end{lem}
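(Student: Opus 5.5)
The plan is to prove the three inclusions separately, using only the definition of $\vec{\epsilon}$-nondominance and the set convergence $\lim_i Z_i=Z$ (that is, $\limsup_i Z_i\subseteq Z\subseteq\liminf_i Z_i$). The middle inclusion $\liminf\subseteq\limsup$ is trivial. The whole argument rests on one observation: the strict componentwise inequality $z'>z+\vec{\epsilon}$ is an open condition, so it is stable under small perturbations of $z'$, $z$ and $\vec{\epsilon}$.

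\emph{Upper inclusion.} Let $z\in\limsup_i Z^*_i(\vec{\epsilon}_i)$, so $z=\lim_k z_{i_k}$ with $z_{i_k}\in Z^*_{i_k}(\vec{\epsilon}_{i_k})$. Then $z\in\limsup_i Z_i=Z$.
\begin{itemize}
\item Suppose, for contradiction, that $z\notin Z^*(\vec{\epsilon})$. Then there is $z'\in Z$ with $z'\neq z$ and $z'>z+\vec{\epsilon}$.
\item Since $Z\subseteq\liminf_i Z_i$, there are $z'_i\in Z_i$ with $z'_i\to z'$.
\item By openness of the strict inequality, $z'_{i_k}>z_{i_k}+\vec{\epsilon}_{i_k}$ for all large $k$.
\item Because $z'\neq z$, also $z'_{i_k}\neq z_{i_k}$ for large $k$.
\end{itemize}
This contradicts $z_{i_k}\in Z^*_{i_k}(\vec{\epsilon}_{i_k})$. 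Upper semicontinuity of $\vec{\epsilon}\mapsto Z^*(\vec{\epsilon})$ is the special case $Z_i\equiv Z$.

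\emph{Lower inclusion.} Let $z\in Z^*(\vec{\epsilon}\,')$ and choose $z_i\in Z_i$ with $z_i\to z$. I will show that $z_i\in Z^*_i(\vec{\epsilon}_i)$ for all large $i$, which gives $z\in\liminf_i Z^*_i(\vec{\epsilon}_i)$.
\begin{itemize}
\item If this fails along a subsequence, there are $u_i\in Z_i$ with $u_i\neq z_i$ and $u_i>z_i+\vec{\epsilon}_i$.
\item I need the $u_i$ to be bounded. I would get this by assuming, or extracting from the intended meaning of the convergence, that the $Z_i$ eventually lie in a common bounded set. This is automatic, for example, for sample approximations of a compact set.
\item Passing to a further subsequence, $u_i\to u\in\limsup_i Z_i=Z$ with $u\ge z+\vec{\epsilon}$.
\item To contradict $z\in Z^*(\vec{\epsilon}\,')$, I need $u>z+\vec{\epsilon}\,'$ strictly and $u\neq z$.
\end{itemize}

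This final step is the main obstacle. Strictness $u>z+\vec{\epsilon}\,'$ holds only if $\vec{\epsilon}\,'<\vec{\epsilon}$ in every component. Under the literal hypothesis ($\vec{\epsilon}\,'\le\vec{\epsilon}$ with only one component strict) it can fail, so I would read the hypothesis as componentwise strict, or else handle the equal components with an extra argument. The condition $u\neq z$ can also fail when $\vec{\epsilon}\,'$ has negative components, because $u=z$ is then compatible with $u>z+\vec{\epsilon}\,'$. To rule out $u=z$, I would use that $u_i\neq z_i$ and try to reselect $u_i$ bounded away from $z_i$. If that does not work, I would add the assumption $\vec{\epsilon}\ge0$, so that $u\ge z+\vec{\epsilon}$ together with $u>z+\vec{\epsilon}\,'$ separates $u$ from $z$.

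\emph{Convex case.} By the upper inclusion, it suffices to show $Z^*(\vec{\epsilon})\subseteq\liminf_i Z^*_i(\vec{\epsilon}_i)$. Since $\liminf$ sets are closed, it is enough to show that $Z^*(\vec{\epsilon})\subseteq\mathrm{cl}\bigcup_{\vec{\epsilon}\,'<\vec{\epsilon}}Z^*(\vec{\epsilon}\,')$. Fix $z\in Z^*(\vec{\epsilon})$ and let $\vec{\epsilon}\,'\uparrow\vec{\epsilon}$. For convex compact $Z$, I would argue that points of $Z^*(\vec{\epsilon}\,')$ can be found near $z$, using the segment from $z$ toward a suitable point of $Z$ and the fact that the dominating sets $\{z'\in Z: z'>z+\vec{\epsilon}\,'\}$ shrink continuously with $\vec{\epsilon}\,'$. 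I expect this density step to need the most care.
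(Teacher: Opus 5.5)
Your upper inclusion is correct and follows the paper's argument: approximate the dominating point $z'$ by points $z'_i\in Z_i$ and note that the strict inequality $z'>z+\vec{\epsilon}$ survives for large $k$. Your version, with $>$ and the extra check $z'_{i_k}\neq z_{i_k}$, is the right one. The paper writes $\ge$ together with $\neq$, and from that the contradiction for large $k$ would not follow.

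Your route to the lower inclusion is also the paper's, and the first two obstacles you raise are real; the paper passes over both. It extracts a convergent subsequence of the dominating points by appealing to compactness of $Z$, which is not enough. For example, $Z_i=\{0,i\}\to Z=\{0\}\subset{\rm R}$ with $\epsilon_i=\epsilon=1$ and $\epsilon\,'=0$ gives $Z^*_i(1)=\{i\}$, so $\liminf_i Z^*_i(\epsilon_i)=\emptyset\neq Z^*(0)$. The paper also ends with $z'\ge z^*+\vec{\epsilon}\,'$, $z'\neq z^*+\vec{\epsilon}\,'$, which is not $\vec{\epsilon}\,'$-dominance in the sense of Definition 1. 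Componentwise strictness $\vec{\epsilon}\,'<\vec{\epsilon}$ (as in Theorem 1) is genuinely needed. Take $Z_i\equiv Z=\{(t,0):0\le t\le 1\}$, $\vec{\epsilon}_i=(0,-1/i)\to\vec{0}$ and $\vec{\epsilon}\,'=(-1,0)$. Then $(0,0)\in Z^*(\vec{\epsilon}\,')$, but $Z^*_i(\vec{\epsilon}_i)=\{(1,0)\}$.

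The genuine gaps in your proposal are the treatment of $u\neq z$ and the convex case. Reselecting the $u_i$ cannot work, because the inclusion itself can fail when $\vec{\epsilon}\le 0$. In ${\rm R}$, take $Z_i=\{0,1/i^2\}\to\{0\}$, $\epsilon_i=-1/i\to\epsilon=0$ and $\epsilon\,'=-1$. For $i\ge 2$ each point of $Z_i$ $\epsilon_i$-dominates the other, so $Z^*_i(\epsilon_i)=\emptyset$, while $Z^*(\epsilon\,')=\{0\}$. So $\vec{\epsilon}\ge 0$ is the wrong fallback: for $\vec{\epsilon}=\vec{0}$, the relations $u\ge z$ and $u>z+\vec{\epsilon}\,'$ do allow $u=z$. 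What closes your argument is that some component $\epsilon_j>0$, since then $u_j\ge z_j+\epsilon_j>z_j$.

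For the convex case the paper gives no proof at all, and the claim is false once $\vec{\epsilon}$ has a zero component. With $\vec{\epsilon}_i\equiv\vec{0}$, the convex sets $Z_i=\{(t,t/i):0\le t\le 1\}$ converge to $Z=\{(t,0):0\le t\le 1\}$ and $Z^*(\vec{0})=Z$, yet $Z^*_i(\vec{0})=\{(1,1/i)\}$. Your density step fails there too, since $Z^*(\vec{\epsilon}\,')=\emptyset$ for every $\vec{\epsilon}\,'<\vec{0}$ whenever a convex $Z$ has more than one point.

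For $\vec{\epsilon}>0$ the density step can be completed as you suggest. If some $p\in Z$ has $p\ge z+\vec{\epsilon}$, put $w=z+s(p-z)$ with $s=1-\min_j\epsilon'_j/\epsilon_j$. Then $w+\vec{\epsilon}\,'\ge z+\vec{\epsilon}$, so $w\in Z^*(\vec{\epsilon}\,')$, and $w\to z$ as $\vec{\epsilon}\,'\to\vec{\epsilon}$. If no such $p$ exists, $Z$ is at positive distance from $z+\vec{\epsilon}+{\rm R}^m_+$, and $z$ itself belongs to $Z^*(\vec{\epsilon}\,')$ for $\vec{\epsilon}\,'$ close to $\vec{\epsilon}$.
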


{\it Proof.}
Let us prove the first inclusion $Z^*(\vec{\epsilon}\,')\subseteq \liminf_i Z^*_i(\vec{\epsilon}_i)$, i.e. that for each point $\vec{z}^{\,*}\in Z^*(\vec{\epsilon}\,')$ there exists a sequence $\{Z^*_i(\vec{\epsilon}_i)\ni \vec{z}^{\,*}_i\rightarrow \vec{z}^{\,*}\}$. 
Let us fix $\vec{z}^{\,*}\in Z^*(\vec{\epsilon}\,')$ 
and assume that $\vec{z}^{\,*}\notin \liminf_i Z^*_i(\vec{\epsilon}_i)$. Since $\vec{z}^{\,*}\in Z^*(\vec{\epsilon}\,')\subseteq Z=\lim_i Z_i$, there exists a sequence $Z_i\ni \vec{z}_i\rightarrow \vec{z}^{\,*}$. Due to assumption $\vec{z}^{\,*}\notin \liminf_i Z^*_i(\vec{\epsilon}_i)$ there is an infinite subsequence $\{\vec{z}_{i_k}\notin Z^*_{i_k}(\vec{\epsilon}_{i_k})\}$. So there are $\vec{z}\,'_{i_k}\in Z_{i_k}(\vec{\epsilon}_{i_k})$ such that $\vec{z}\,'_{i_k}\ge\vec{z}_{i_k}+\vec{\epsilon}_{i_k}$. Since $Z$ is a compact set then, without loss of generality, we can assume that $\vec{z}\,'_{i_k}\rightarrow \vec{z}\,'\in Z$ 
and hence $\vec{z}\,'=\lim_k \vec{z}\,'_{i_k}\ge \lim_k \vec{z}_{i_k}+\lim_k\vec{\epsilon}_{i_k}=\vec{z}^{\,*}+\vec{\epsilon}\ge\vec{z}^{\,*}+\vec{\epsilon}\,'$ and
$\vec{z}\,'\ne\vec{z}^{\,*}+\vec{\epsilon}\,'$.  
This means that point $\vec{z}^{\,*}$ is $\vec{\epsilon}\,'$-dominated, a contradiction. 

Let us prove the second inclusion, $\limsup_i Z^*_i(\vec{\epsilon}_i)\subseteq Z^*(\vec{\epsilon})$. Assume the contrary, that there is a subsequence $\{Z^*_{i_k}(\vec{\epsilon}_{i_k})\ni \vec{z}^{\,*}_{i_k}\rightarrow \vec{z}\,'\notin Z^*(\vec{\epsilon})\}$. Then there is a point $\vec{z}\,''\in Z$ and a sequence  $\{\vec{z}_i\in Z_i\}$ such that $\vec{z}\,''\ge\vec{z}\,'+\vec{\epsilon}$,
$\vec{z}\,''\ne\vec{z}\,'+\vec{\epsilon}$ and $ \vec{z}_i\rightarrow \vec{z}\,''$. So we obtain $ \lim_{k}\vec{z}_{i_k}=\vec{z}\,''\ge\vec{z}\,'+\vec{\epsilon}=\lim_k\left(\vec{z}^{\,*}_{i_k}+\vec{\epsilon}_{i_k}\right),$ and
$ \lim_{k} \vec{z}_{i_k}=\vec{z}\,''\ne\vec{z}\,'+\vec{\epsilon}=\lim_k \left(\vec{z}^{\,*}_{i_k}+\vec{\epsilon}_{i_k}\right),$
 that contradicts $\vec{\epsilon}_{i_k}$-efficiency of points $\vec{z}^{\,*}_{i_k}$ for sufficiently large $k$. The proof is complete. 

In the context of stochastic multi-criteria optimization, in the present paper we deal with the following vector optimization problem: \[\left \{{\rm E} f_ {i} (x, \omega), \, \, \sigma _ {i}^{+} (x), \, \, P \left \{f_ {i} (x, \omega) \le u_ {i} \right \}, \, \, i = 1, ..., m \right \} \to \min _ {x \in X}, \] for which we build approximations of $ \vec {\epsilon} $-optimal sets. Here performance indicators $ {\rm E} f_ {i} (x, \omega) $ serve as utility measures, but $ \sigma _ {i}^{+} (x) $ and $ P \left \{f_ {i} (x, \omega) \le u_ {i} \right \} $ represent risk measures for solution $ x $. 

\section{Approximation of multi-criteria optimization problems} 

Consider the general problem of multi-criteria optimization of the form \begin{equation}\label{MOP2} \vec{F}(x)=\{f_1(x),...,f_m(x)\}\rightarrow \max_{x\in X\subset{\rm R}^n}, \end{equation}where the functions $f_i(x),\;i=1,...,m,$ assumed to be continuous on a compact set $X\subset {\rm R}^n$, and a preference relation in the criteria space ${\rm R}^m$ are set by the nonnegative vector cone ${\rm R}^m_{++}=\{x\in {\rm R}^m:\,x_i>0,\,i=1,...,m\}$. The problem is about finding the of week Pareto-optimal set $X^*$ and a set $X^*(\vec{\epsilon})$ $\vec{\epsilon}$-effective points for(\ref{MOP2}). 

It is easy to see, that the mapping $\vec{\epsilon}\rightarrow X^*(\vec{\epsilon})$ is semi-continuous form above ie.  $\limsup_i X^*(\vec{\epsilon}_i)\subseteq X^*(\vec{\epsilon})$ for any subsequence $\vec{\epsilon}_i\rightarrow\vec{\epsilon}$. Really, let us assume the contrary Then for some sequence $\vec{\epsilon}_i\rightarrow\vec{\epsilon}$ exists $X^*(\vec{\epsilon}_{i_k})\ni x_{i_k}\rightarrow x'\notin X^*(\vec{\epsilon})$. Due to dominance $x'$ there is a point $x''\in X$ such that $\vec{F}(x'')>\vec{F}(x')+\vec{\epsilon}=\lim_k\vec{F}(x_{i_k})+\vec{\epsilon}$, that contradicts to the concept of dominance $x_{i_k}$. 

Let us consider also an approximations of the problem  (\ref{MOP2}): \begin{equation}\label{MOP2_approx} \vec{F}^i(x)=\{f_1^i(x),...,f_m^i(x)\}\rightarrow \max_{x\in X_i\subset{\rm R}^n}, \;\;\;i=1,2,..., \end{equation} where the sequence of sets $\{X_i\}$ converges to the set $X$, $\lim_i X_i=X$, and the sequence of vector functions $\{\vec{F}^i(x), \;x\in X_i\}$ converges to the vector function $\vec{F}(x),\;x\in X$ (in the sense of definitions \ref{Def2} adn \ref{Def3}). Denote $X^*_i(\vec{\epsilon})$ the set of $\vec{\epsilon}$ nondominated points of the problem (\ref{MOP2_approx}). 

\begin{defn} \label{Def2}
(continuous convergence of the sequence of vector functions) The sequence of vector functions $\vec{F}^i(x),\;x\in X_i$ is called continuously convergent to the vector function $\vec{F}(x),\;x\in X$, if a) $\lim_i X_i=X$, b) for any sequence $X_i\ni x_i\rightarrow x$ is true $\limsup_i\vec{F}^i(x_i)= \vec{F}(x)$ (component wise), 
\end{defn}

\begin{defn}\label{Def3} 
(graphic convergence of sequence of vector functions from below ). Sequence of vector functions $\vec{F}^i(x),\;x\in X_i $ is called graphically converging from below to the vector function $\vec{F}(x),\;x\in X$, if a) $\lim_i X_i=X$, b) for each sequence $X_i\ni x_i\rightarrow x$  we have $\limsup_i\vec{F}^i(x_i)\leqslant \vec{F}(x)$ (component-wise), and c) for any point $x\in X$ there is a sequence $X_i\ni x_i\rightarrow x$ such that $\lim_i\vec{F}^i(x_i)=\vec{F}(x)$. 
\end{defn}

The concepts of continuous and graphical convergence of multivalued mappings and functions (epi- and hypo-convergence) were studied in detail in \cite[Sec. 6E, 6G, 7B]{Rockafellar_Wets_1998}. Definitions \ref{Def2} and \ref{Def3} differ from the corresponding notions from \cite{Rockafellar_Wets_1998} by the fact that in definitions \ref{Def2} and \ref{Def3} domains $X_i,\, X$ of functions $\vec{F}^i,\, \vec{F}$ are clearly outlined , and in \cite[Def. 5.41]{Rockafellar_Wets_1998} functions are considered to be defined on the common domain $X$ or ${\rm R}^n$. And besides, the definition \ref{Def3} extends the definition of the graphic convergence of scalar functions \cite[7(3), 7(9), Def. 7.1]{Rockafellar_Wets_1998} to vector functions. 

{\bf Examples} 
of graphically convergent from below sequences of vector functions. 

{\bf E1.} Obviously, if a sequence $\{\vec{F}^i(x),\;x\in X_i\}$ converges continuously to $\{\vec{F}(x),\;x\in X\}$, i.e. $\lim_i X_i=X$ and $\lim_i\vec{F}^i(x_i)=\vec{F}(x)$ for any sequence $X_i\ni x_i\rightarrow x$, then $\{\vec{F}^i(\cdot)\}$ converges to $\vec{F}(\cdot)$ graphically. 

{\bf E2.} Obviously, if all, except the first, scalar components of $\{\vec{F}^i(\cdot)\}$ converge continuously to the corresponding scalar components of $\{\vec{F}(\cdot)\}$, and the first component of $\{\vec{F}^i(\cdot)\}$ hypo-converges to the first component of $\vec{F}(\cdot)$, then $\{\vec{F}^i(\cdot)\}$ graphically converges  from below to $\vec{F}(\cdot)$. 

{\bf E3.} If $\vec{F}^i(x)=\vec{F}(x,y_i),\;x\in X,\;Y\ni y_i\rightarrow y$, where $\vec{F}(x,y)$  is component-wise upper semicontinuous on $X\times Y$ and is continuous at $y\in Y$ for any $x\in X$, then $\{\vec{F}^i(\cdot,y_i)\}$ graphically  converges  from below to $\vec{F}(\cdot,y)$. In particular, this case includes a stationary sequence of upper semicontinuous vector functions $\vec{F}^i(x)=\vec{F}(x),\;x\in X_i=X$. 

{\bf E4.} Example of construction of a continuously convergent sequence of functions. Let function $\vec{F}(x),\;x\in X$  be continuous on a closed set $X$, and a sequence of functions $\{\vec{F}^i(x),\;x\in X_i\subseteq X\}$ is such that $\Delta_i:=\sup_{x\in X_i}\|\vec{F}^i(x)-\vec{F}(x)\|\rightarrow 0$ with $i\rightarrow\infty$. Then, for any sequence $(X_i\ni) x_i\rightarrow x$, we obtain \[ \|\vec{F}^i(x_i)-\vec{F}(x)\|\le \|\vec{F}^i(x_i)-\vec{F}(x_i)\|+\|\vec{F}(x_i)-\vec{F}(x)\| \le \Delta_i+ \|\vec{F}(x_i)-\vec{F}(x)\|\rightarrow 0. \] 

{\bf E5.} If the objective vector function of problem (\ref{MOP2}) has the form of expectation, 
$\vec{F}(x)={\rm E}\vec{F}(x,\omega)$, $x\in X$, then the sample average approximations 
$\vec{F}^i(x)=\left(1/M_i\right)\sum_{k=1}^{M_i}\vec{F}(x,\omega_k)$ can be used instead of 
$\vec{F}(x)$, where $\{\omega_k,\; k=1,2,...\}$ are i.i.d. observations of the random parameter 
$\omega$. Terms of uniform and, therefore, continuous convergence of empirical estimates 
$\vec{F}^i(x)$ to $\vec{F}(x)$ on the set $X$ can be found in 
\cite[Sec. 7.2.5]{Shapiro_Dentcheva_Ruszczynski_2009}. 

Next we present sufficient conditions of continuous convergence of discretely defined empirical functions $\vec{F}^i$ to a continuous expectation function $\vec{F}$. Assume that functions 
$\vec{F}(x,\omega)$ are uniformly bounded on $ X $, $\|\vec{F}(x,\omega)\|\le M$ and at each point $x\in X_i$ of a discrete set $X_i\subset X$ (with the number of elements $N_i$) an empirical estimate  $\vec{F}^i(x)=\left(1/M_i\right)\sum_{k=1}^{M_i}\vec{F}(x,\omega_k)$ is independently constructed such that 
\[ {\rm Pr}\left\{\|\vec{F}^i(x)-\vec{F}(x)\|>\delta\right\}\le 
C\exp\left\{-2M_i\delta^2/M^2\right\}\;\;\;\forall\;\delta>0. \] 
Such estimates follow, e.g.,  from Hoeffding's inequality 
\cite[Sec. 7.2.8]{Shapiro_Dentcheva_Ruszczynski_2009} with $C=2m$. 
Then $\Delta_i=\max_{x\in X_i}\|\vec{F}^i(x)-\vec{F}(x)\|\rightarrow 0$ with probability one, if, for example, $M_i\ge \alpha N_i$, $\alpha>0$, and the numerical sequence $\{N_i\}$ strictly monotonically increases to infinity. Indeed, the assertion follows from the fact that for any $\delta>0$ it holds true 
\[ \sum_{i=1}^\infty {\rm Pr}\left\{\Delta_i>\delta\right\}\le 
\sum_{i=1}^\infty CN_i\exp\left\{-2M_i\delta^2/M^2\right\}\le 
\sum_{i=1}^\infty CN_i\exp\left\{-2N_i\alpha\delta^2/M^2\right\}<+\infty. \] 

\begin{thm}\label{theo1} 
(Convergence of solutions of approximate problems (\ref{MOP2_approx})). Let a sequence of sets $\{X_i\}$ converges to a compact set  $X$, $\lim_i X_i=X$, and a sequence of functions $\{\vec{F}^i(x), \;x\in X_i\}$ graphically converges from below to a vector function  $\vec{F}(x),\;x\in X$. Let $\lim_i\vec{\epsilon}_i=\vec{\epsilon}$. Then for each $\vec{\epsilon}\,'<\vec{\epsilon}$ we have 
\[ X^*(\vec{\epsilon}\,')\subseteq\liminf_i X^*_i(\vec{\epsilon}_i)\subseteq \limsup_i X^*_i(\vec{\epsilon}_i)\subseteq X^*(\vec{\epsilon}). \] 
\end{thm}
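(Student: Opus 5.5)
The plan is to prove the three inclusions directly in the decision space, following Lemma~\ref{Lemma_1} but working with points $x$ rather than their images. Here $x\in X^*(\vec{\epsilon})$ means that no $x''\in X$ has $\vec{F}(x'')>\vec{F}(x)+\vec{\epsilon}$ componentwise, and similarly for $X^*_i(\vec{\epsilon}_i)$ with $\vec{F}^i$ on $X_i$. The middle inclusion is trivial. The two outer inclusions use different halves of Definition~\ref{Def3}:
\begin{itemize}
\item the first inclusion uses the upper estimate b) together with compactness of $X$;
\item the last inclusion uses the recovery sequences of part c).
\end{itemize}

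\emph{Last inclusion.} Let $X^*_{i_k}(\vec{\epsilon}_{i_k})\ni x_{i_k}\to x'$, so $x'\in X$ because $\lim_i X_i=X$. Suppose $x'\notin X^*(\vec{\epsilon})$. Then some $x''\in X$ satisfies $\vec{F}(x'')>\vec{F}(x')+\vec{\epsilon}$ strictly in every component, with a margin $\eta>0$.
\begin{itemize}
\item By c) there is a sequence $X_i\ni z_i\to x''$ with $\vec{F}^i(z_i)\to\vec{F}(x'')$.
\item By b) applied to $x_{i_k}\to x'$, we have $\limsup_k \vec{F}^{i_k}(x_{i_k})\le \vec{F}(x')$.
\item Since also $\vec{\epsilon}_{i_k}\to\vec{\epsilon}$, for large $k$ we get $\vec{F}^{i_k}(z_{i_k})>\vec{F}^{i_k}(x_{i_k})+\vec{\epsilon}_{i_k}$, each component having gap at least $\eta/2$.
\end{itemize}
This contradicts $x_{i_k}\in X^*_{i_k}(\vec{\epsilon}_{i_k})$. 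Here the strict inequality in the dominance definition is what lets an upper bound on the $\limsup$ suffice.

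\emph{First inclusion.} Fix $x^*\in X^*(\vec{\epsilon}\,')$ with $\vec{\epsilon}\,'<\vec{\epsilon}$ componentwise. By c) pick $X_i\ni x_i\to x^*$ with $\vec{F}^i(x_i)\to\vec{F}(x^*)$. I claim $x_i\in X^*_i(\vec{\epsilon}_i)$ for all large $i$, which gives $x^*\in\liminf_i X^*_i(\vec{\epsilon}_i)$. Otherwise, along a subsequence there are $x'_{i_k}\in X_{i_k}$ with $\vec{F}^{i_k}(x'_{i_k})>\vec{F}^{i_k}(x_{i_k})+\vec{\epsilon}_{i_k}$.
\begin{itemize}
\item Since $X$ is compact and $\lim X_i=X$, the sets $X_i$ eventually lie in a bounded neighborhood of $X$. (Strictly, this follows from Painlevé–Kuratowski convergence to a compact limit only if the $X_i$ are eventually bounded; I would add this as an implicit assumption or note that it holds in the applications, e.g.\ $X_i\subseteq X$.) So I can extract a further subsequence $x'_{i_k}\to x'\in X$.
\item Applying b) and passing to the limit gives $\vec{F}(x')\ge\limsup_k \vec{F}^{i_k}(x'_{i_k})\ge \vec{F}(x^*)+\vec{\epsilon}$.
\item Because $\vec{\epsilon}>\vec{\epsilon}\,'$ strictly, this yields $\vec{F}(x')>\vec{F}(x^*)+\vec{\epsilon}\,'$, i.e.\ $x^*$ is $\vec{\epsilon}\,'$-dominated, a contradiction.
\end{itemize}

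The main obstacle is this first inclusion. Limits only preserve the non-strict inequality $\ge\vec{\epsilon}$, and the strict gap $\vec{\epsilon}\,'<\vec{\epsilon}$ is exactly what restores strict dominance; this is why the theorem requires strict componentwise $\vec{\epsilon}\,'<\vec{\epsilon}$ rather than the weaker condition of Lemma~\ref{Lemma_1}. The secondary delicate point is the compactness extraction, which needs the eventual boundedness of the $X_i$ noted above.
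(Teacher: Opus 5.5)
Your proof is correct and follows the paper's argument essentially step for step. Both outer inclusions are proved by contradiction, pairing a recovery sequence from c) with the upper estimate b). In the first inclusion, the strict gap $\vec{\epsilon}\,'<\vec{\epsilon}$ is exactly what the paper uses to restore strict dominance after passing to the limit.

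Your boundedness caveat is well taken. The paper's step ``since $X$ is compact, $x'_{i_k}\rightarrow x'\in X$'' silently requires the $X_i$ to lie eventually in a common bounded set. This is automatic when $X_i\subseteq X$, as in the MRS algorithm. Without it the first inclusion can fail: take $m=1$, $X=\{0\}$, $X_i=\{0,i\}$, $F(0)=F^i(0)=0$, $F^i(i)=100$, $\epsilon_i=\epsilon=1$ and $\epsilon'=1/2$. Then $X^*(\epsilon')=\{0\}$ but $X^*_i(\epsilon_i)=\{i\}$.
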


{\it Proof.}
Let us show, that $\limsup_i X^*_i(\vec{\epsilon}_i)\subseteq X^*(\vec{\epsilon})$. Assume the contrary, that $X^*_{i_k}(\vec{\epsilon}_{i_k})\ni x^*_{i_k}\rightarrow x'\notin X^*(\vec{\epsilon})$. Since $x^*_{i_k}\in X^*_{i_k}\subseteq X_{i_k}$ and $\lim_i X_i=X$, then $x'\in X$. As $x'\notin X^*(\vec{\epsilon})$, then there exists $x''\in X$ such that $\vec{F}(x'')>\vec{F}(x')+\vec{\epsilon}\;$ and, due to graphical convergence from below of 
$\{\vec{F}^i(\cdot)\}$ to $\vec{F}(\cdot)$, it holds  $\vec{F}(x'')>\vec{F}(x')+\vec{\epsilon}\ge 
\lim_k\vec{F}^{i_k}(x^*_{i_k})+\vec{\epsilon}$. Since $\lim_i X_i=X$ then there is a sequence $\{x_i\in X_i\}$ such that $x_i\rightarrow x''$, $\vec{F}^i(x_i)\rightarrow \vec{F}(x'')$, and thus $x_{i_k}\rightarrow x''$, $\vec{F}^{i_k}(x_{i_k})\rightarrow \vec{F}(x'')$. Hence, $\lim_k\vec{F}^{i_k}(x_{i_k})=\vec{F}(x'')>\vec{F}(x')+\vec{\epsilon}\ge \lim_k\left(\vec{F}^{i_k}(x^*_{i_k})+\vec{\epsilon}_{i_k}\right)$ and for sufficiently large $k$ 
points $x_{i_k}$ $\vec{\epsilon}_{i_k}$-dominate points $x^*_{i_k}$. This contradiction proves the required assertion, $\limsup_i X^*_i(\vec{\epsilon}_i)\subseteq X^*(\vec{\epsilon})$. 

Let us now prove that $X^*(\vec{\epsilon}\,')\subseteq \liminf_i X^*_i(\vec{\epsilon}_i)$, i.e. for each point  $x^*\in X^*(\vec{\epsilon}\,')$ there exists a sequence $X^*_i(\vec{\epsilon}_i)\ni x^*_i\rightarrow x^*$. Let us fix $x^*\in X^*(\vec{\epsilon}\,')$ and suppose the contrary, that $x^*\notin \liminf_i X^*_i(\vec{\epsilon}_i)$. Since $x^*\in X^*(\vec{\epsilon}\,')\subseteq X$, then there is a sequence $X_i\ni x_i\rightarrow x^*$ such that $\lim_i\vec{F}^i(x_i)=\vec{F}(x^*)$. By the contrary assumption,  $x_{i_k}\notin X^*_{i_k}(\vec{\epsilon}_{i_k})$ for some infinite subsequence $\{x_{i_k}\}$. Hence there are $x'_{i_k}\in X_{i_k}$ such that $\vec{F}^{i_k}(x'_{i_k})>\vec{F}^{i_k}(x_{i_k})+\vec{\epsilon}_{i_k}$. Since $X$ is compact, without loss of generality, we can consider that $x'_{i_k}\rightarrow x'\in X$ and conclude
$$\vec{F}(x')\ge\limsup_k\vec{F}^{i_k}(x'_{i_k})\ge \lim_k\vec{F}^{i_k}(x_{i_k})+\lim_k\vec{\epsilon}_{i_k}= \vec{F}(x^*)+\vec{\epsilon}>\vec{F}(x^*)+\vec{\epsilon}\,', $$ 
that contradicts $\vec{\epsilon}\,'$-nondominance of $x^*$. The proof is complete. 

\section{Multi-criteria random search (MRS) algorithm and its convergence}

The next {\it random search algorithm} uses random discrete approximations $X_i=\cup_{k=1}^i \tilde{X}_k$  feasible set $X$ and estimates $\vec{F}^i(x),\;x\in X_i,$ of the objective function (\ref{MOP2}). The algorithm generates a random sequence of approximate solutions ${X}^*_i$, $i=1,2,...$, of the task (\ref{MOP2}) as follows. 

At the first iteration a first generation  of $N_1$ points $\tilde{X}_{1}$ is randomly generated in the set $X$, the estimates $\vec{F}^1(x)$ of the objective function $\vec{F}(x)$ are built for all points $x\in \tilde{X}_1$ and in the set $\{\vec{F}^1(x),\,x\in \tilde{X}_{1}\}$ a subset $\{\vec{F}^1(x),\,x\in {X}^*_{1}(\vec{\epsilon})\}$ of all $\vec{\epsilon}_1$-nondominated points is chosen. 

Suppose that at iteration $i$ we already have built the set ${X}^*_i$. Then (preferably in a vicinity of the set ${X}^*_{i}$) a new generation of $N_i$ random points $\tilde{X}_i$ is generated, estimates $\vec{F}^i(x)$ of the objective function $\vec{F}(x)$ for all $x\in X_i=\cup_{k=1}^i \tilde{X}_k$ are built, and from the set $\{\vec{F}^i(x),\,x\in X_i\}$ the subset $\{\vec{F}^i(x),\,x\in {X}^*_{i}\}$ of $\vec{\epsilon}_i$-nondominated points  is chosen, and then we proceed to iteration $i+1$. The process continues indefinitely long or ends at reaching the limit of iterations.

Below we formulate conditions of convergence of the MRS algorithm, announced in
\cite{NorkinBV_2015_DNASU}, \cite{NorkinBV_2015_KVT}. The next statement follows from Borel-Cantelli lemma.

\begin{lem}\label{lem2}
Let  a sequence of random sets $\{\tilde{X}_i, \;i=1,2,...\}$ be such that $\tilde{X}_i\subseteq X$ with probability one and with non-zero probability $p_i(x,\delta)>0$ the set $\tilde{X}_i$ intersects with any $\delta$-vicinity of any point $x\in X$, and it holds $\sum_i p_i(x,\delta)=+\infty$. Then with probability one $\limsup_i \tilde{X}_i=X$ and thus $\lim_i \cup_{k=1}^i \tilde{X}_k=X$. 
\end{lem}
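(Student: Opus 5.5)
The plan is to reduce the statement to countably many applications of the second Borel--Cantelli lemma and then pass to the set-convergence statements by a density argument. Throughout I would assume, as is implicit in the phrase ``follows from Borel--Cantelli lemma'', that the generations $\tilde{X}_1,\tilde{X}_2,\dots$ are generated independently. I would also read $p_i(x,\delta)$ as $\Pr\{\tilde{X}_i\cap B(x,\delta)\neq\emptyset\}$, or a lower bound for it, where $B(x,\delta)$ is the open $\delta$-ball. Without independence the second Borel--Cantelli lemma does not apply. I regard making this hypothesis explicit as the main delicate point; the rest is routine.

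\emph{Step 1 (countable reduction).} Since $X$ is compact, I choose a countable dense subset $D\subseteq X$. For each pair $(x,\delta)$ with $x\in D$ and $\delta\in\mathbb{Q}_{+}$, consider the events $A_i(x,\delta)=\{\tilde{X}_i\cap B(x,\delta)\neq\emptyset\}$. These are independent in $i$, and $\sum_i \Pr(A_i(x,\delta))\ge\sum_i p_i(x,\delta)=+\infty$. By the second Borel--Cantelli lemma, $A_i(x,\delta)$ occurs for infinitely many $i$ with probability one. Intersecting over the countably many pairs $(x,\delta)$ gives a single event $\Omega_0$ with $\Pr(\Omega_0)=1$. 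On $\Omega_0$, every such ball meets infinitely many $\tilde{X}_i$.

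\emph{Step 2 ($\limsup_i\tilde{X}_i=X$ on $\Omega_0$).} The inclusion $\limsup_i\tilde{X}_i\subseteq X$ holds because $\tilde{X}_i\subseteq X$ and $X$ is closed. For the converse, fix $\bar{x}\in X$. For $j=1,2,\dots$ pick $x_j\in D$ with $\|x_j-\bar{x}\|<1/(2j)$, and note that $B(x_j,1/(2j))\subseteq B(\bar{x},1/j)$. On $\Omega_0$ infinitely many $\tilde{X}_i$ meet $B(x_j,1/(2j))$. I then choose inductively indices $i_1<i_2<\dots$ and points $z_{i_j}\in\tilde{X}_{i_j}$ with $\|z_{i_j}-\bar{x}\|<1/j$. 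This gives $z_{i_j}\to\bar{x}$, hence $\bar{x}\in\limsup_i\tilde{X}_i$.

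\emph{Step 3 (the unions).} Put $U_i=\cup_{k=1}^i\tilde{X}_k$, an increasing sequence of subsets of the closed set $X$. Then $\limsup_i U_i\subseteq X$, and $\limsup_i U_i\supseteq\limsup_i\tilde{X}_i=X$. It remains to show $X\subseteq\liminf_i U_i$. Take $\bar{x}\in X$ and the subsequence $z_{i_j}\to\bar{x}$ from Step 2. For $i_j\le i<i_{j+1}$ set $u_i=z_{i_j}$, which lies in $U_i$ by monotonicity; for $i<i_1$ take $u_i$ to be an arbitrary point of $U_i$ (any point of $U_i$ will do for these finitely many $i$, provided $\tilde X_1$ is nonempty). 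Then $u_i\to\bar{x}$, so $\bar{x}\in\liminf_i U_i$. Hence $\lim_i U_i=X$ with probability one.
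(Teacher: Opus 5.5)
Your proof is correct and follows the paper's route. Both use a countable dense subset of $X$, the (second) Borel--Cantelli lemma at each of its points, a countable intersection of probability-one events, and then density together with closedness of $\limsup_i\tilde{X}_i$; your diagonal choice of the $z_{i_j}$ just makes that last step explicit. You are also right that independence of the generations is a tacit but necessary hypothesis: for example, $\tilde{X}_i\equiv\tilde{X}_1$, a single uniformly distributed point of $X=[0,1]$, meets all the stated conditions, yet $\limsup_i\tilde{X}_i=\tilde{X}_1$. Your rational radii and your Step 3 supply details the paper only asserts, namely the countable reduction and $\lim_i\cup_{k=1}^i\tilde{X}_k=X$.
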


{\it Proof.}
By assumption of the lemma, $\limsup_i \tilde{X}_i\subseteq X$. Let us show that with probability one the reverse inclusion is satisfied,  $X\subseteq\limsup_i \tilde{X}_i$. Let us choose a countable everywhere dense subset $X'$ in $X$ and show that with probability one  $\limsup_i \tilde{X}_i\supseteq X'$, from which the required assertion follows.  Let us fix an arbitrary point $x'\in X'$. By the conditions of the lemma, with probability one the sequence $\tilde{X}_i$ hits any $\delta$-vicinity of $x'$ the infinite number of times, so with probability one, there exists a subsequence $\{\tilde{X}_{i_k}\ni x_{i_k}\rightarrow x'\}$, i.e., with probability one $x'\in \limsup_i \tilde{X}_i$. The countability of $X'$ guarantees that with probability one $X'\subseteq \limsup_i \tilde{X}_i$, and by virtue of the density of $X'$ in $X$ and closeness of the set $\limsup_i \tilde{X}_i$ it follows that $X\subseteq \limsup_i \tilde{X}_i$ with probability one. The proof is complete. 

\begin{lem}\label{lem3}
Let $\tilde{X}_i\subseteq X$, $\limsup_i \tilde{X}_i=X,$ $\lim_i\vec{\epsilon}_i=\vec{\epsilon}$, 
$\vec{F}(x)$ is continuous on $X$ and $\Delta_i=\sup_{x\in X_i}\|\vec{F}^i(x)-\vec{F}(x)\|\rightarrow 0$. Then, for each $\vec{\epsilon}\,'<\vec{\epsilon}$, we have \[ X^*(\vec{\epsilon}\,')\subseteq\liminf_i \hat{X}_i(\vec{\epsilon})\subseteq \limsup_i \hat{X}_i(\vec{\epsilon})\subseteq X^*(\vec{\epsilon}). \] 
\end{lem}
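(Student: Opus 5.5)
The plan is to reduce Lemma~\ref{lem3} to Theorem~\ref{theo1}. I read $\hat{X}_i(\vec{\epsilon})$ as the set $X^*_i(\vec{\epsilon}_i)$ produced by the MRS algorithm, namely the $\vec{\epsilon}_i$-nondominated points of $\vec{F}^i$ over $X_i=\cup_{k=1}^i\tilde{X}_k$. Theorem~\ref{theo1} has two hypotheses: (a) $\lim_i X_i=X$ with $X$ compact, and (b) graphical convergence from below of $\{\vec{F}^i(x),\,x\in X_i\}$ to $\vec{F}(x),\,x\in X$ in the sense of Definition~\ref{Def3}. Once both are verified, the chain of inclusions is exactly the conclusion of Theorem~\ref{theo1}, with the same $\vec{\epsilon}_i\to\vec{\epsilon}$ and $\vec{\epsilon}\,'<\vec{\epsilon}$.

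\emph{Step 1 (set convergence).} The sets $X_i$ are increasing and contained in $X$. Hence $\limsup_i X_i\subseteq X$, since $X$ is closed. Now take $x\in X=\limsup_i\tilde{X}_i$. There are $\tilde{X}_{i_k}\ni z_k\to x$. For $i_k\le i<i_{k+1}$ put $x_i=z_k\in\tilde{X}_{i_k}\subseteq X_i$, which is allowed because the union is monotone. Then $X_i\ni x_i\to x$, so $X\subseteq\liminf_i X_i$. For the finitely many initial indices $i<i_1$, pick $x_i\in X_i$ arbitrarily; this uses $X_i\neq\emptyset$, which holds once $N_1\ge1$. Together these give $\lim_i X_i=X$, which is the argument already implicit at the end of Lemma~\ref{lem2}. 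Compactness of $X$ is part of the standing assumptions on problem (\ref{MOP2}).

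\emph{Step 2 (functional convergence).} I follow example E4. For any $X_i\ni x_i\to x$,
\[
\|\vec{F}^i(x_i)-\vec{F}(x)\|\le\Delta_i+\|\vec{F}(x_i)-\vec{F}(x)\|\to0,
\]
using continuity of $\vec{F}$ on $X$ and $x_i\in X$. So $\vec{F}^i$ converges continuously to $\vec{F}$. By E1 this gives graphical convergence from below: condition b) holds with equality, and condition c) follows from Step 1 by taking any $X_i\ni x_i\to x$.

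\emph{Step 3.} Apply Theorem~\ref{theo1}. The only delicate point is Step 1: the hypothesis concerns $\limsup$ of the individual generations, while Theorem~\ref{theo1} needs a genuine $\lim$ of the accumulated sets. The monotone-union device above handles this. Beyond that there is the notational issue that $\hat{X}_i(\vec{\epsilon})$ must be understood with tolerances $\vec{\epsilon}_i$; if the fixed $\vec{\epsilon}$ is intended instead, the same argument applies with the constant sequence $\vec{\epsilon}_i\equiv\vec{\epsilon}$.
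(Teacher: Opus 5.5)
Your proposal is correct and matches the paper's proof, which simply notes that $X_i=\cup_{k=1}^i\tilde{X}_k$ converges to $X$ and then invokes Theorem~\ref{theo1}. You also fill in the two steps the paper leaves implicit, and both are right: the monotone-union argument that turns $\limsup_i\tilde{X}_i=X$ into $\lim_i X_i=X$, and the E4/E1 argument that $\Delta_i\to0$ together with continuity of $\vec{F}$ gives graphical convergence from below.
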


{\it Proof.}
Note, that for $X_i=\cup_{k=1}^i \tilde{X}_k$ we get $\lim_i X_i=X$, so the assertion of the lemma  follows from Theorem \ref{theo1}. 

The above lemma actually covers the case of sample average approximation of the vector objective function outlined in example {\bf E5}.

As a consequence of Lemmas \ref{lem2}, \ref{lem3} we obtain the following result on the convergence of random search algorithm $\vec{\epsilon}$-nondominated set of the problem (\ref{MOP2}).

\begin{thm}
(Convergence of the MRS algorithm). Let vector function $\vec{F}(x)$ be continuous on a compact set $X\subset{\rm R}^m$, random sets $\tilde{X}_i$ with positive probability $p_i(x,\delta)>0$ intersect with any $\delta$-vicinity of each point $x\in X$, and $\sum_i p_i(x,\delta)=+\infty$. Denote $X_i=\cup_{k=1}^i\tilde{X}_k$. Let $\{\vec{F}^i(x),\,x\in X_i\subset X\}$ be a sequence of random vector functions such that with probability one $\Delta_i=\sup_{x\in X_i}\|\vec{F}^i(x)-\vec{F}(x)\|\rightarrow 0$.  Then, with probability one a) all cluster points of  $\{{X}^*_i(\vec{\epsilon}_i)\}$ belong to $X^*(\vec{\epsilon})$ and b) for each point $x^*\in X^*(\vec{\epsilon}\,')$, $\vec{\epsilon}\,'<\epsilon$, there is a sequence of points $\{x_i\in {X}^*_i(\vec{\epsilon}_i)\}$ convergent to $x^*$. 
\end{thm}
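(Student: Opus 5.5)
The plan is to combine Lemma~\ref{lem2} and Lemma~\ref{lem3}, both applied on a single event of full probability, and then invoke Theorem~\ref{theo1}.

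First, fix the sets of probability one. By Lemma~\ref{lem2}, the hypotheses on $\tilde{X}_i$ give an event $\Omega_1$ with $\Pr(\Omega_1)=1$ on which $\limsup_i \tilde{X}_i=X$. On this event $X_i=\cup_{k=1}^i\tilde{X}_k$ is increasing and contained in $X$, so $\limsup_i X_i\subseteq X$. Moreover, every $x\in X$ is a limit of some $\tilde{x}_{i_k}\in\tilde{X}_{i_k}$. Setting $x_i=\tilde{x}_{i_k}$ for $i_k\le i<i_{k+1}$ gives $x_i\in X_i$ and $x_i\to x$. Hence $\lim_i X_i=X$. By hypothesis there is a second event $\Omega_2$ with $\Pr(\Omega_2)=1$ on which $\Delta_i\to0$. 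We work on $\Omega_1\cap\Omega_2$, which still has probability one, and fix an outcome there; everything below is deterministic.

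Second, verify graphical convergence from below (Definition~\ref{Def3}). The argument is that of example E4. For any $X_i\ni x_i\to x$ we have
\[
\|\vec{F}^i(x_i)-\vec{F}(x)\|\le\Delta_i+\|\vec{F}(x_i)-\vec{F}(x)\|\to0,
\]
using continuity of $\vec{F}$ on the compact set $X$. So the convergence is actually continuous: condition b) of Definition~\ref{Def3} holds with equality. Condition c) follows by taking any sequence $X_i\ni x_i\to x$, which exists because $\lim_iX_i=X$.

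Third, apply Theorem~\ref{theo1} with $X$ compact and $\vec{\epsilon}_i\to\vec{\epsilon}$. The inclusion $\limsup_iX^*_i(\vec{\epsilon}_i)\subseteq X^*(\vec{\epsilon})$ is exactly assertion a): every cluster point of the sets ${X}^*_i(\vec{\epsilon}_i)$ belongs to $X^*(\vec{\epsilon})$. The inclusion $X^*(\vec{\epsilon}\,')\subseteq\liminf_iX^*_i(\vec{\epsilon}_i)$ for $\vec{\epsilon}\,'<\vec{\epsilon}$ is, by the definition of $\liminf$, exactly assertion b): there exist $x_i\in X^*_i(\vec{\epsilon}_i)$ with $x_i\to x^*$. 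This is Lemma~\ref{lem3}, stated along the full sequence.

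The only delicate point is bookkeeping. The almost-sure statements must be intersected into one full-measure event before the deterministic Theorem~\ref{theo1} is used. The proof also needs $\lim_iX_i=X$ for the unions, not just $\limsup_i\tilde{X}_i=X$; the second paragraph handles this by filling gaps with the last available point. One should also note that $\vec{\epsilon}_i$ is a deterministic sequence with limit $\vec{\epsilon}$, as implicitly assumed in the statement.
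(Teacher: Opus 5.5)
Your proposal is correct and follows the paper's route: the paper obtains this theorem directly as a consequence of Lemma 2 (which gives $\lim_i X_i = X$ with probability one) and Lemma 3 (which is Theorem 1 applied after the E4 estimate shows continuous, hence graphical-from-below, convergence). You additionally spell out the bookkeeping the paper leaves implicit, namely intersecting the two probability-one events before invoking the deterministic Theorem 1, and checking the conditions of Definition 3.
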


In particular, if $\vec{\epsilon}>0$, then for each weakly Pareto optimal point $x^*\in X^*\subseteq X$ there is a sequence of points $\{x_i\in {X}_i(\epsilon)\}$, convergent to $x^*$. And by virtue of upper semicontinuity of mapping $X^*(\vec{\epsilon})$ in $\vec{\epsilon}=\vec{0}$ for a sufficiently small vector $\vec{\epsilon}$ the set $X^*(\vec{\epsilon})$ will appear in an arbitrarily small neighborhood of the weakly Pareto optimal points $X^*=X^*(\vec{0})$.

\section {Multiobjective stochastic optimal control of insurance business in discrete time.} 

Our methodology of multiobjective stochastic optimization to solution of multiobjective actuarial stochastic optimal control problems is reviewed in \cite{Ermoliev_Norkins_2020}, \cite{NorkinBV_2024}.

Consider a controlled vector stochastic process of the following form \cite {Gihman_Skorohod_1977}: \begin{eqnarray}  
y^{t + 1}& =& f_ {t} (y^{0}, y^{1}, ..., y^{t}; \xi^{0}, \xi^{1}. .., \xi^{t}; u_ {t} (y^{0}, ..., y^{t}; \xi^{0}, ..., \xi^{t})), \nonumber\\
&& \;\;y^{0} = x_ {0}, \;\;t = 0,1, ..., T, \label{ZEqnNum961840}
\end{eqnarray} 
where $ t = 0,1, ..., T $ denotes the discrete time; $ \left \{y^{0}, y^{1}, ... \right \} $ is a sequence of states of the process; $ \left \{\xi^{0}, \xi^{1}, ... \right \} $ is an uncontrolled sequence of random variables that affect the state of the process; $ \{u_ {t} (\cdot)\in U_ {t}\} $ is a sequence of random controls selected from sets of admissible controls $ U_ {t} $; $ y^{0} = x_ {0} $ is an initial state of the process; $ \left \{f_ {t} (\cdot) \right \} $ is the process model. 

As an example we can consider the simplified case, in which the evolution of a capital $ y^t $ 
of an insurance firm
in a discrete time $ t = 0,1, ... $ can be described by equation \cite {Schmidli_2008}: \begin {equation} \label {risk_process} y^{t + 1} = \left \{\begin {array} {lc} {y^{t} -u_t + Z^t,} & {y^{t} \ge 0} \\{y^{t},} & {y^{t} <0,} \end {array} \right. , \end {equation} where $ y^0 $ is a seed capital, $ u_t $ is dividend amount, $ Z^t $ is a random insurance premium or claim at time $t$. 

Suppose that at each step $ t $ of process  (\ref {ZEqnNum961840}) the decision maker (DM), evaluates the process by means of functions $ r_ {ti} (y^{0}, ..., y^{t}; u_ {0} (\cdot), ..., u_ {k} (\cdot); \xi^{0}, ..., \xi^{k}) $, $ i = 1, ..., m $, and the total discounted estimates of the process for the $ T + $ 1 periods of time are given by 
\begin {equation} \label {ZEqnNum636784} I_ {i} \left (u_ {0} (\cdot), u_ {1} (\cdot), ..., u_ {T} (\cdot) \right) = {\rm E} \sum _ {k = 0}^{T} \gamma^{k} r_ {ki} (y^{0}, ..., y^{k}; u_ {0} (\cdot), ..., u_ {k} (\cdot); \xi^{0}, ..., \xi^{k}), 
\end {equation} 
where $ \gamma \in (0,1] $ is discount factor, $ i = 1, ..., m $. By choosing different functions $ r_ {ki} (y^{0}, ..., y^{k}; u_ {0} (\cdot), ..., u_ {k} (\cdot); \xi^{0}, ..., \xi^{k}) $  evaluation of different aspects of the model is possible. If, for example, 
\[r_ {ki^{*}} (y^{0}, ..., y^{k}; u_ {0} (\cdot), ..., u_ {k} (\cdot); \xi^{0}, ..., \xi^{k}) = \left \{\begin {array} {cc} {1,} & {\exists y^{k} \in A, \, \, k \le t,} \\{0,} & {y^{k} \notin A \, \, \, \forall k \le t,} \end {array} \right. \;\;\;\gamma = 1, \] 
then the corresponding indicator $ I_ {i^{*}} (\cdot) $ represents the probability of  the process hitting $ A. $ 

Thus, the problem of stochastic multi-criteria (Pareto-optimal) control has the form 
\begin {equation} 
\label {ZEqnNum508990} \left [I_ {i} \left (u_ {0} (\cdot), u_ {1} (\cdot), ..., u_ {T} (\cdot) \right), \, \, i = 1, ..., m \right] \to {\rm extr} _ {\left \{u_ {t} (\cdot) \in U_ {t}, \, \, t = 0, ..., T \right \}}. 
\end {equation} 

The essential difficulty in solving the problem (\ref {ZEqnNum508990}) is the calculation or estimation of the mathematical expectation (\ref {ZEqnNum636784}) over all possible trajectories of the process (\ref {ZEqnNum961840}). In general, this can only be done by Monte Carlo method. Another difficulty is that the problem (\ref {ZEqnNum508990}) is infinite-dimensional. 

Problem (\ref {ZEqnNum508990}) is considerably simplified if to restrict the set, where we search for optimal controls, to the class of parametrically defined functions $ U_ {t} = \left \{u_ {t} (y^{0}, ..., y^{t} ; \xi^{0}, ..., \xi^{t}; x^{t}), \, \, x^{t} \in X^{t} \right \} $, where $ x^{t} $ is a finite-dimensional parameter. Then the functional $ I_ {i} $ becomes a function of the finite-dimensional parameters, and the optimization problem (\ref {ZEqnNum508990}) is transformed into a finite-dimensional vector stochastic programming problem.

Another way of simplification is to consider (vector) Markov processes: 
\begin {equation} 
\label {ZEqnNum602415} y ^ {t + 1} = f (y ^ {t}; \xi ^ {t}; u (y ^ {t}; \xi ^ {t})),\;\; y ^ {0} = x_ {0}, \;\;t = 0,1, ..., T, 
\end {equation} 
where the model $ f $ and control $ u $ are not changed with time, and the next state $ y ^ {t + 1} $ depends only on the current state $ y ^ {t} $ of the system  and on the current state of the environment $ \xi ^ {t} $. In many cases, the optimal control $ u_ {t} (\cdot) $ can be found in the class of functions that depend only on $ (y ^ {t}; \xi ^ {t}) $ or $ y ^ {t} $ \cite {Gihman_Skorohod_1977}. In particular, optimal control of dividends in dynamic models of insurance companies   (\ref {risk_process}) often takes on the form of the so-called barrier strategy \cite {Schmidli_2008}, \cite {Albrecher_Thonhauser_2009}, \cite {Avanzi_2009},  where dividends are paid only if the company's capital $ y ^ t $ exceeds a certain threshold $ b $ (barrier). If the functional form of the control is chosen and depends only on $ n $-dimensional parameter $ x \in {\rm R} ^ {n} $, i.e. $ u (y, \xi, x) $, then (\ref {ZEqnNum508990}) again becomes the problem of a finite dimensional  multi-criteria stochastic programming, 
\begin {equation} 
\label {ZEqnNum247227} \left [I_ {i} ^ {T} \left (x_ {0}, x \right) = {\rm E} \sum _ {k = 0} ^ {T } \gamma ^ {k} r_ {ki} (y ^ {k}, \xi ^ {k}, u (y ^ {k}, \xi ^ {k}, x)), \, \, i = 1, ..., m \right] \to {\rm extr} _ {x \in X}. 
\end {equation}

A separate problem is the computation of indicator values $ I_ {i} \left (x_ {0}, x \right) $ for some fixed parameter vector $ \left (x_ {0}, x \right) $, because they are mathematical expectations over random trajectories of process (\ref {ZEqnNum602415}). Even if the random variables $ \xi ^ {k} $ are discrete with known distributions, finding the expectation in (\ref {ZEqnNum247227}) requires summation over all possible paths of the process (\ref {ZEqnNum602415}), which appears to be problematic. A universal method for estimating integrals (\ref {ZEqnNum636784}) is the Monte Carlo method, which, however, may require a very large number of trials to achieve acceptable accuracy. Note that simulating trajectories of the process (\ref {ZEqnNum961840}) can be performed in {\it parallel} that significantly reduces the computation time of the Monte Carlo method. An alternative approach of finding indicators $ I_ {i} ^ {T} \left (x_ {0}, x \right) $ is solving  Bellman type integral equations  \cite {Norkin_2014_CSA5}. It appears that under certain conditions on Markov process (\ref {ZEqnNum602415}), indicators $ I_ {i} ^ {t} \left (x_ {0}, x \right) $, $ t = 0,1, ... , T, $ satisfy relations \cite {Norkin_2014_CSA5}:
\[I_{i}^{t} \left(x_{0} ,x\right)={\rm E}r_{(T-t)i} (x_{0} ,\xi ,u(x_{0} ,\xi ,x))+\gamma {\rm E}I_{i}^{t-1} \left(f(x_{0} ;\xi ;u(x_{0} ,\xi ,x)),x\right), \] 
\begin{equation} \label{ZEqnNum170085} I_{i}^{0} \left(x_{0} ,x\right)={\rm E}r_{Ti} (x_{0} ,\xi ,u(x_{0} ,\xi ,x)),  \;\;\;t=1,...,T.      \end{equation} 

If the random variable $ \xi \in \left \{\xi _ {1}, ..., \xi _ {S} \right \} $ is discrete with known probabilities $ p_ {s} $ of its realizations $ \xi _ {s} $, then (\ref {ZEqnNum170085}) are turn into deterministic relations,
\[I_{i}^{t} \left(x_{0} ,x\right)=\sum _{s=1}^{S}
r_{(T-t)i} (x_{0} ,\xi _{s} ,u(x_{0} ,\xi _{s} ,x))p_s +
\gamma \sum _{s=0}^{S}
I_{i}^{t-1} \left(f(x_{0} ;\xi _{s} ;u(x_{0} ,\xi _{s} ,x)),x\right)p_s ,  \] 
\[I_{i}^{0} \left(x_{0} ,x\right)=\sum _{s=1}^{S}
r_{Ti} (x_{0} ,\xi _{s} ,u(x_{0} ,\xi _{s} ,x))p_s ,  \;\;\;t=0,...,T-1,\] 
which can be used to calculate $ I_ {i} ^ {T} \left (x_ {0}, x \right) $. To do this, we have to define a grid over a one-dimensional variable $ x_ {0} $ and successively compute $ I_ {i} ^ {0} \left (\cdot, x \right) $, $ I_ {i} ^ {1} \left (\cdot, x \right) $, \dots, $ I_ {i} ^ {T} \left (\cdot, x \right) $ in the grid nodes, and do interpolation in between. Note that this iterative process possesses natural {\it} parallelism property. The values of $ I_ {i} ^ {t} \left (\cdot, x \right) $ in the various grid points are calculated independently of each other on the basis of the discrete approximation of the function $ I_ {i} ^ {t-1} \left (\cdot, x \right) $, obtained in the previous iteration. Therefore, these calculations are similar to \cite [equation (7)] {Norkin_2011_HPC-UA} and can be {\it parallelized}.

The presented methodology of solution of multiobjective stochastic optimization problems were
implemented  in decision support systems for optimization of insurance business \cite {Norkin_2012_KPIeng}, \cite {Norkin_2014_CSA2}, \cite{NorkinBV_2014_TAAC}.

\section {Conclusions} 

The article describes an information technology of multi-criterion stochastic optimization of "input- random output" systems. In practice, these models are highly nonlinear and non-convex. Their functioning can generally be described by deterministic vector parameters such as means, quantiles, probabilities of reaching/exiting specified areas, etc. Collapsing the vector component into a scalar indicator with the view of its optimization is not always possible. So, the task is to find such inputs, that correspond to the Pareto-optimal performance indicator vectors. This paper proposes a methodology of solve such problem by visualized interactive {\it parallel} random search with selection of Pareto-optimal points (optimization cloud). The technology is illustrated by an insurance multi-criteria optimization support system.

\bibliographystyle{unsrt}

\end{document}